\def\qed{\hfill $\Box$}
\renewenvironment{proof}[1][\proofname]{\par
  \normalfont
  \topsep6\p@\@plus6\p@ \trivlist
  \item[\hskip\labelsep{\bfseries #1}\@addpunct{\bfseries.}]\ignorespaces
}{
  \endtrivlist
}
\renewcommand{\proofname}{Proof}
\begin{document}
\theoremstyle{plain}
\newtheorem{defi}{Definition}[section]
\newtheorem{lem}[defi]{Lemma}
\newtheorem{thm}[defi]{Theorem}
\newtheorem{rem}[defi]{Remark}
\newtheorem{fact}[defi]{Fact}
\newtheorem{ex}[defi]{Example}
\newtheorem{prop}[defi]{Proposition}
\newtheorem{cor}[defi]{Corollary}
\newtheorem{condition}[defi]{Condition}
\newtheorem{assumption}[defi]{Assumption}
\newcommand{\sign}{\mathop{\rm sign}}
\newcommand{\conv}{\mathop{\rm conv}}
\newcommand{\argmax}{\mathop{\rm arg~max}\limits}
\newcommand{\argmin}{\mathop{\rm arg~min}\limits}
\newcommand{\argsup}{\mathop{\rm arg~sup}\limits}
\newcommand{\arginf}{\mathop{\rm arg~inf}\limits}
\newcommand{\diag}{\mathop{\rm diag}}
\newcommand{\minimize}{\mathop{\rm minimize}\limits}
\newcommand{\maximize}{\mathop{\rm maximize}\limits}
\title{Cox's proportional hazards model with a high-dimensional and sparse regression parameter}
\author{Kou Fujimori
\\
\\
Waseda University}
\date{}
\maketitle
\begin{abstract}
%% Text of abstract
This paper deals with the proportional hazards model proposed by D.\ R.\ Cox in a 
high-dimensional and sparse setting for a regression parameter. 
To estimate the regression parameter, the Dantzig selector is applied.
The variable selection consistency of the Dantzig selector for the model will be proved.
This property enables us to reduce the dimension of the parameter and to 
construct asymptotically normal estimators for the regression parameter and the cumulative baseline hazard function.
\end{abstract}
\section{Introduction}
\label{}
The proportional hazards model, which was proposed by \cite{key cox}
is one of the most commonly used models for survival analysis.
In a fixed dimensional setting, $i.e.$, the case where the number of covariates $p$ is fixed,
\cite{key gill} proved that the maximum partial likelihood estimator for
regression parameter has the consistency and the asymptotic normality. 
Besides, they discussed the asymptotic property of the Breslow estimator for cumulative 
baseline hazard function.

Recently, many researchers are interested in a high-dimensional and sparse setting for a regression parameter, that is, the case where $p \gg n$ and the number $S$ of nonzero components in the true value is relatively small. In this setting, several kinds of estimation methods have been proposed for various regression-type models. 
Especially, the penalized methods such as Lasso 
(\cite{key tibshirani97}, \cite{key huang}, \cite{key bradic} and others) have been well studied.
In particular, \cite{key huang} derived oracle inequalities of the Lasso estimator for the 
proportional hazards model, which means the Lasso estimator satisfies the consistency even in a high-dimensional setting. 
\cite{key bradic} considered the general penalized estimators including Lasso, SCAD and others 
and proved that the estimators satisfies the consistency and the asymptotic normality.
On the other hand, the Dantzig selector, which was proposed by \cite{key tao} for the linear regression model, is also applied to the proportional hazards model by 
\cite{key antoniadis}, who dealt with the $l_2$ consistency of the estimator. 
\cite{key Fujimori} extended the consistency results of the Dantzig selector for the model to the $l_q$ consistency for every $q \in [1,\infty]$ 
by a method similar to that of \cite{key huang}.
However, the asymptotic normalities of estimators for high-dimensional regression parameter and the Breslow estimator have not yet been studied up to our knowledge. 

In this paper, we will focus on the asymptotic normalities of estimators in a high-dimensional setting. To discuss this problem, we need to consider the dimension reduction of the regression parameter.
We will show that the Dantzig selector has variable selection consistency, which enables us 
to reduce the dimension.
Then, we will construct a new maximum partial likelihood estimator by using the variable selection consistency result and 
show that this estimator has the asymptotic normality.
In addition, we will prove that a Breslow type estimator, which is obtained by using the 
maximum partial likelihood estimator after dimension reduction, satisfies the asymptotic normality.

This paper is organized as follows. 
The model setup, some regularity conditions and matrix conditions to deal with a high-dimensional and sparse setting are introduced in Section 2.
In Section 3, we prove the asymptotic properties of the high-dimensional regression parameter, that is, the variable selection consistency of the Dantzig selector and the asymptotic normality of the maximum partial likelihood estimator after dimension reduction.
The asymptotic property of the Breslow type estimator is established in Section 4.

Throughout this paper, we denote by $\|\cdot\|_q$ the $l_q$ norm of vector for every $q \in[1,\infty]$, $i.e.$ for $v = (v_1,v_2,\ldots,v_p)^T \in \mathbb{R}^p$ we denote: 
\begin{align*}
\|v\|_q &= \left(\sum \limits_{j=1}^p |v_j|^q \right)^{\frac{1}{q}},\quad q < \infty ; \\
\|v\|_{\infty} &= \sup \limits_{1 \leq j \leq p} |v_j|.
\end{align*}
In addition, for a $m \times n$ matrix $A$, where $m,\ n \in \mathbb{N}$, we define $\|A\|_{\infty}$ by
\[\|A\|_{\infty} := \sup \limits_{1 \leq i \leq m} \sup \limits_{1 \leq j \leq n} |A_i^j|,\]
where $A_i^j$ denotes the $(i,j)$-component of the matrix $A$.
For a vector $v \in \mathbb{R}^p$, and an index set $T\subset \{1,2,\ldots,p\}$, 
we write $v_T$ for the $|T|$-dimensional sub-vector of $v$ restricted by the index set $T$, 
where $|T|$ is the number of elements in the set $T$.
Similarly, for a $p \times p$ matrix $A$ and index sets $T,T' \subset \{1,2,\ldots,p\}$, 
we define the $|T| \times |T'| $ sub-matrix $A_{T,T'}$ by 
\[
A_{T,T'} := (A_{i}^j)_{i \in T, j \in T'}.
\]
\section{Preliminaries}
\subsection{Model setup}
Let $T_i$ be a survival time and $C_i$ a censoring time of $i$-th individual for every 
$i =1,2,\ldots,n$, which are positive real valued random variables on a probability space 
$(\Omega,\mathcal{F},P)$.
Assume that each $i$-th individual has an $\mathbb{R}^p$-valued 
covariate process $\{Z_i(t)\}_{t \in [0,1]}$, and that the survival time $T_i$ is conditionally independent of the censoring time $C_i$ given $Z_i(t)$.
Moreover, we assume that $T_i$'s never occur simultaneously.
For every $n \in \mathbb{N}$ and $t \in [0,1]$, 
we observe $\{(X_i,D_i,Z_i(t))\}_{i=1}^n$, where $X_i := T_i \land C_i$ and 
$D_i := 1_{\{T_i \leq C_i\}}$. We define the counting process $\{N_i(t)\}_{t \in [0,1]}$ and 
$\{Y_i(t)\}_{t \in [0,1]}$ for every 
$i = 1,2,\ldots,n$ as follows:
\[
N_i(t) := 1_{\{t \leq X_i,D_i = 1\}}, \quad Y_i(t) := 1_{\{X_i \geq t\}}, \quad t \in [0,1] 
\]
Let $\{\mathcal{F}_t\}_{t \in [0,1]}$ be the filtration defined as follows:
\begin{align*}
\mathcal{F}_t  &:= \sigma \{ N_i(u),\ Y_i(u),\ Z_i(u);\ 0 \leq u \leq t,\ i = 1,2,\ldots,n \}.
\end{align*}
Suppose that $\{Z_i(t)\}_{t \in [0,1]}$, $i = 1,2,\ldots,n$ are predictable processes.
In Cox's proportional hazards model, it is assumed that each $\{N_i(t)\}_{t \in [0,1]}$ for every 
$i = 1,2,\ldots,n$ has the following intensity:
\[
\lambda_i(t) := Y_i(t) \lambda_0(t) \exp(\beta_0^T Z_i(t)),\quad t \in [0,1],
\]
where $\lambda_0(t)$ is the unknown deterministic baseline hazard function and 
$\beta_0 \in \mathbb{R}^p$ is the unknown regression parameter.
Then, we have that the following process $\{M_i(t)\}_{t \in [0,1]}$ for every $i = 1,2,\ldots,n$ 
is a square integrable martingale:
\[
M_i(t):= N_i(t) - \int_0^t \lambda_i(s) ds,\quad t \in [0,1].
\]
Note that predictable variation process of $\{M_i(t)\}_{t \in [0,1]}$ is given by:
\[
\langle M_i, M_i \rangle (t) = \int_0^t \lambda_i(s) ds, \quad t \in[0,1]
\]
and
\[
\langle M_i, M_j \rangle (t) = 0 ,\quad i \not= j,\ t \in [0,1].
\]
Hereafter, we write $\Lambda_0$ for the cumulative baseline hazard function, $i.e.$, 
\[
\Lambda_0(t) := \int_0^t \lambda_0(s) ds,\quad t \in [0,1].
\]
The aim of this paper is to estimate the regression parameter $\beta_0$ and the
cumulative baseline hazard $\Lambda_0$ in a high-dimensional and sparse setting for 
$\beta_0$, $i.e.$, 
\[
p = p_n \gg n,\quad S := |T_0| \ll n,
\]
where $T_0 := \{j;\beta_0^j \not = 0\}$ is the support index set of the true value.
To estimate $\beta_0$, we use Cox's $\log$-partial likelihood which is given by;
\[
C_n(\beta) := \sum_{i = 1}^n \int_0^1 \{\beta^T Z_i(t) - \log S_n^{(0)} (\beta,t) \} dN_i(t),
\]
where 
\[
S_n^{(0)} (\beta,t) := \sum_{i =1}^n Y_i (t) \exp(\beta^T Z_i(t)).
\]
Put $l_n(\beta) = C_n(\beta)/n$.
We write $U_n(\beta)$ for the gradient of $l_n(\beta)$ and 
$J_n(\beta)$ for the Hessian of $-l_n(\beta)$, $i.e.$, 
\[
U_n(\beta) = \frac{1}{n} \sum_{i=1}^n \int_0^1 \left\{Z_i(t) - \frac{S_n^{(1)}}{S_n^{(0)}} (\beta,t)\right\}dN_i(t)
\]
and
\[
J_n(\beta) = \frac{1}{n} \sum_{i = 1}^n \int_0^1 \left\{\frac{S_n^{(2)}}{S_n^{(0)}}(\beta,t)
-\left(\frac{S_n^{(1)}}{S_n^{(0)}}\right)^{\otimes 2} (\beta,t) \right\}dN_i(t),
\]
where
\[
S_n^{(1)} (\beta,t) := \sum_{i =1}^n Z_i(t) Y_i(t) \exp(\beta^T Z_i(t))
\]
and 
\[
S_n^{(2)}(\beta,t) := \sum_{i=1}^n Z_i(t)^{\otimes 2} Y_i (t) \exp(\beta^T Z_i(t)).
\]
Note that $U_n(\beta_0)$ is a terminal value of the following square integrable martingale:
\[
U_n(\beta_0,t) := \frac{1}{n} \sum_{i=1}^n \int_0^t \left\{Z_i(s) - \frac{S_n^{(1)}}{S_n^{(0)}} (\beta,s)\right\}dM_i(s).
\]
\subsection{Regularity conditions and matrix conditions}
We assume the following conditions.
\begin{assumption}\label{regularity}
\begin{description}
\item[$(i)$]
The true value $\beta_0$ satisfies that $\|\beta_0\|_1 < \infty$. Moreover, there exists a global constant $C >0$ such that
\[\inf_{j \in T_0} |\beta_0^j| > C.
\]
\item[$(ii)$]
The covariate processes $\{Z_i(t)\}_{t \in [0,1]}$, $i = 1,2,\ldots,n$, are 
uniformly bounded, $i.e.$, there exists global constant $K_1 > 0$ such that 
\[
\sup_{t \in [0,1]} \sup_{i} \|Z_i(t)\|_\infty < K_1.
\]
\item[$(iii)$]
The baseline hazard function $\lambda_0$ is integrable, $i.e.$, 
\[
\int_0^1 \lambda_0(t) dt < \infty.
\]
\item[$(iv)$]
For every $n \in \mathbb{N}$, there exist deterministic $\mathbb{R}$-valued function $s_n^{(0)}(\beta,t)$, 
$\mathbb{R}^{p_n}$ valued function $s_n^{(1)}(\beta,t)$ and $\mathbb{R}^{p_n \times p_n}$-
valued function $s_n^{(2)}(\beta,t)$ which satisfy the following conditions:
\[
\sup_{\beta} \sup_{t \in [0,1]} 
\left\|\frac{1}{n} S_n^{(l)} (\beta,t) - s_n^{(l)}(\beta,t) \right\|_\infty \rightarrow^p 0, \quad l = 0,1,2
\] 
as $n \rightarrow \infty$.
\item[$(v)$]
%The functions $s_n^{(l)}(\beta,t)$, $l=0,1,2$ are uniformly continuous for $\beta$ 
%in the sense of $l_1$-norm to $l_\infty$-norm. 
The functions $s_n^{(l)}(\beta,t)$, $l=0,1,2$, satisfy the following conditions:
\[
\limsup_{n \rightarrow \infty} \sup_{\beta} \sup_{t \in [0,1]} \|s_n^{(l)}(\beta,t)\|_\infty < \infty
,\quad l = 0,1,2.
\]
\[
\liminf_{n \rightarrow \infty} \inf_{\beta} \inf_{t \in [0,1]} s_n^{(l)}(\beta,t) >0.
\]
\item[$(vi)$]
For every $\beta$, the following $p_n \times p_n$ matrix $I_n(\beta)$ is nonnegative definite:
\[
I_n(\beta):= \int_0^1 \left[\frac{s_n^{(2)}}{s_n^{(0)}} (\beta,t) - \left(\frac{s_n^{(1)}}{s_n^{(0)}}\right)^{\otimes 2} (\beta,t) \right] s_n^{(0)}(\beta_0,t) \lambda_0(t) dt.
\] 
\item[$(vii)$]
For every $\epsilon > 0$, it holds that
\[
\sum_{i = 1}^n \int_0^1 
\left\| \xi_{n T_0, i}\right\|_2^2
1_{\{\|\xi_{n T_0,i}\|^2\} > \epsilon} Y_i(t) \exp(\beta_0^T Z_i(t)) \lambda_0(t) dt
\rightarrow^p 0,
\]
where
\[
\xi_{n T_0,i} := \frac{1}{\sqrt{n}}\left\{ Z_{i T_0}(t) - \frac{S_{n T_0}^{(1)}}{S_n^{(0)}}(\beta_{0 T_0},t) \right\}.
\]
\end{description}
\end{assumption}
Note that the condition $(vii)$ ensures that Lindeberg's condition holds.
Recalling that $T_0 = \{j; \beta_0^j \not=0\}$ is the support index set 
of the true value $\beta_0$,
we introduce the following factor for the matrix $I_n(\beta_0)$.
\begin{defi}
Define the set $C_{T_0} \subset \mathbb{R}^{p_n}$ as follows:
\[
C_{T_0} := \{h \in \mathbb{R}^{p_n} ; \|h_{T_0^c}\|_1 \leq \|h_{T_0}\|_1 \}.
\]
\begin{description}
\item[Compatibility factor:]
\[
\kappa (T_0;I_n(\beta_0)) := \inf_{0 \not=h \in C_{T_0}} \frac{S^{\frac{1}{2}} (h^T I_n(\beta_0)h)^{\frac{1}{2}}}{\|h_{T_0}\|_1}.
\]
\end{description}
\end{defi}
The matrix factor like this can be seen in many papers which deal with high-dimensional and sparse setting. See, e.g., \cite{key bickel-ritov-tsybakov}, \cite{key buhlmann} and 
\cite{key huang} for the details.
We assume the following condition for $I_n(\beta_0)$.
\begin{assumption}\label{matrix}
The compatibility factor $\kappa(T_0;I_n(\beta_0))$ is asymptotically positive, $i.e.$,
\[
\liminf_{n \rightarrow \infty} \kappa(T_0;I_n(\beta_0)) >0.
\]
\end{assumption}
\section{The estimator for the regression parameter}
\subsection{The Dantzig selector for the proportional hazards model}
Now, we define the estimator for the regression parameter $\beta_0$ by the Dantzig selector
for the proportional hazards model given by:
\begin{equation}\label{Dantzig selector}
\hat{\beta}_n := \arg \min_{\beta \in \mathcal{B}_n} \| \beta \|_1,\quad 
\mathcal{B}_n := \{\beta \in \mathbb{R}^{p_n} ; \|U_n(\beta_0)\|_\infty \leq \gamma\},
\end{equation}
where $\gamma$ is a tuning parameter.
This type of estimator was proposed by \cite{key antoniadis} and was further  discussed by \cite{key Fujimori}.
\subsection{The $l_q$ consistency of the Dantzig selector}
In this subsection, we discuss the consistency of the estimator $\hat{\beta}_n$ in the sense of $l_q$-norm for every $q \in [1,\infty]$.
Assume that $p_n$ and $\gamma = \gamma_{n,p_n}$ satisfy the following conditions:
\[
\log p_n = O(n^\zeta),\quad \gamma_{n,p_n} = O(n^{-\alpha}\log p_n),
\]
where $0 < \zeta < \alpha \leq 1/2$ are constants. Suppose that the sparsity $S$ is fixed constant which does not depend on $n$.
Moreover, we define the random sequence $\epsilon_n$ by:
\[
\epsilon_n := \|I_n(\beta_0) - J_n(\beta_0)\|_\infty.
\]
Then, we can show that $\epsilon_n \rightarrow^p 0$ (see \cite{key Fujimori}).
\begin{thm}\label{l_q}
Under the Assumption \ref{regularity} and \ref{matrix},
the it holds for global positive constants $K_2$ that
\[
\lim \limits_{n \rightarrow \infty}P\left(\|\hat{\beta}_n - \beta_0 \|_1 \geq \frac{4K_2 S \gamma_{n,p_n}}{\kappa^2(T_0;I_n(\beta_0)) - 4S \epsilon_n}\right) = 0.
\]
In particular, it holds for every $q \in [1,\infty]$ that 
$\|\hat{\beta}_n - \beta_0\|_q \rightarrow^p 0$ as $n \rightarrow \infty$.
\end{thm}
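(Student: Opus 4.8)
The plan is to run the standard Dantzig-selector oracle argument, which splits into a probabilistic step establishing that the truth $\beta_0$ is feasible and a deterministic step on that good event combining the cone condition with the compatibility factor. Since $\|v\|_q \le \|v\|_1$ for every $q \in [1,\infty]$, it suffices to prove the displayed $l_1$ bound; the in-probability $l_q$ consistency then follows at once because $S$ is fixed, $\gamma_{n,p_n} \to 0$, $\liminf_n \kappa(T_0;I_n(\beta_0)) > 0$ by Assumption \ref{matrix}, and $\epsilon_n \to^p 0$, so the right-hand side of the bound tends to $0$ in probability.

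First I would show $P(\beta_0 \in \mathcal{B}_n) \to 1$, that is, $P(\|U_n(\beta_0)\|_\infty > \gamma_{n,p_n}) \to 0$. Here $U_n(\beta_0)$ is the terminal value of the square-integrable martingale $U_n(\beta_0,t)$, whose integrand is bounded by Assumption \ref{regularity} $(ii)$, so each coordinate can be controlled by an exponential (Bernstein/Lenglart-type) martingale inequality; a union bound over the $p_n$ coordinates, together with the rate conditions $\log p_n = O(n^\zeta)$ and $\gamma_{n,p_n} = O(n^{-\alpha}\log p_n)$ with $0 < \zeta < \alpha \le 1/2$, makes the total probability vanish. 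On the event $\{\beta_0 \in \mathcal{B}_n\}$ the minimality of $\hat{\beta}_n$ gives $\|\hat{\beta}_n\|_1 \le \|\beta_0\|_1$.

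Next, writing $h := \hat{\beta}_n - \beta_0$, splitting $\|\hat{\beta}_n\|_1 \le \|\beta_0\|_1$ across $T_0$ and $T_0^c$ and using that $\beta_0$ is supported on $T_0$ yields the cone condition $\|h_{T_0^c}\|_1 \le \|h_{T_0}\|_1$, i.e. $h \in C_{T_0}$, and in particular $\|h\|_1 \le 2\|h_{T_0}\|_1$. Since both $\beta_0$ and $\hat{\beta}_n$ lie in $\mathcal{B}_n$, we have $\|U_n(\beta_0) - U_n(\hat{\beta}_n)\|_\infty \le 2\gamma_{n,p_n}$, hence
\[
h^T\big(U_n(\beta_0) - U_n(\hat{\beta}_n)\big) \le 2\gamma_{n,p_n}\|h\|_1 .
\]
Because $-l_n$ is convex with $\nabla U_n = -J_n$, a mean-value expansion writes the left-hand side as $h^T \bar{J}_n h$, where $\bar{J}_n := \int_0^1 J_n(\beta_0 + th)\,dt$ is nonnegative definite; comparing $\bar{J}_n$ to $I_n(\beta_0)$ through the same uniform Hessian control that yields $\epsilon_n \to^p 0$ gives
\[
h^T I_n(\beta_0) h \le 2\gamma_{n,p_n}\|h\|_1 + \epsilon_n \|h\|_1^2 .
\]
Finally the compatibility factor supplies $\kappa^2(T_0;I_n(\beta_0))\,\|h_{T_0}\|_1^2 \le S\, h^T I_n(\beta_0) h$, and substituting $\|h_{T_0}\|_1 \ge \|h\|_1/2$ produces a quadratic inequality in $\|h\|_1$; solving it and collecting the numerical constants into $K_2$ gives exactly
\[
\|h\|_1 \le \frac{4K_2 S \gamma_{n,p_n}}{\kappa^2(T_0;I_n(\beta_0)) - 4S\epsilon_n},
\]
valid whenever the denominator is positive, which holds with probability tending to one.

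The main obstacle is the probabilistic step: obtaining an exponential tail bound for each martingale coordinate sharp enough to survive the union bound over $p_n \gg n$ coordinates under the stated rate conditions. A secondary technical point is the Hessian comparison above, where the score difference naturally produces the averaged Hessian $\bar{J}_n$ rather than $J_n(\beta_0)$; one must argue that the discrepancy between $\bar{J}_n$ and $I_n(\beta_0)$ is absorbed into $\epsilon_n$, which relies on the uniform-in-$\beta$ convergence of $n^{-1}S_n^{(l)}$ in Assumption \ref{regularity} $(iv)$--$(v)$. The remaining manipulations---the cone decomposition, the compatibility inequality, and solving the quadratic---are routine.
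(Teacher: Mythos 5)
You should first note that the paper itself contains no proof of Theorem \ref{l_q}: it simply states that the proof can be found in \cite{key Fujimori}, whose argument follows the method of \cite{key huang}. Your outline has the same skeleton as that proof --- feasibility of $\beta_0$ in $\mathcal{B}_n$ via an exponential martingale inequality (van de Geer's inequality, \cite{key geer}, is the tool) plus a union bound over the $p_n$ coordinates, the cone condition $h \in C_{T_0}$ from $l_1$-minimality, H\"older's inequality giving $h^T\{U_n(\beta_0)-U_n(\hat{\beta}_n)\} \le 2\gamma_{n,p_n}\|h\|_1$, and the compatibility factor to close the quadratic inequality. The probabilistic step and the final algebra are correct in outline. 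The genuine gap is in the Hessian-comparison step, and the resolution you propose for it is circular.

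Concretely, you pass from $h^T\bar{J}_n h \le 2\gamma_{n,p_n}\|h\|_1$, where $\bar{J}_n = \int_0^1 J_n(\beta_0+th)\,dt$, to $h^T I_n(\beta_0) h \le 2\gamma_{n,p_n}\|h\|_1 + \epsilon_n\|h\|_1^2$, claiming the discrepancy between $\bar{J}_n$ and $I_n(\beta_0)$ is ``absorbed into $\epsilon_n$.'' But $\epsilon_n := \|I_n(\beta_0)-J_n(\beta_0)\|_\infty$ controls the Hessian at $\beta_0$ only; it says nothing about $J_n$ at the intermediate points $\beta_0+th$. Assumption \ref{regularity} $(iv)$ does not bridge this either: it compares $n^{-1}S_n^{(l)}(\beta,\cdot)$ with $s_n^{(l)}(\beta,\cdot)$ at the \emph{same} $\beta$, whereas you need to control $\|J_n(\beta_0+th)-J_n(\beta_0)\|_\infty$, which (cf.\ the computation in the proof of Lemma \ref{fischer}) is of order $e^{K_1\|h\|_1}-1$ --- small only when $\|h\|_1$ is small, which is precisely the conclusion being proved. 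The repair used in the cited proofs (Lemma 4.1 of \cite{key huang}) is multiplicative rather than additive: $J_n(\beta)$ is a weighted variance of the covariates, and moving from $\beta_0$ to $\beta_0+th$ changes the weights by factors at most $e^{\pm 2tK_1\|h\|_1}$, so that $h^TJ_n(\beta_0+th)h \ge e^{-2tK_1\|h\|_1}\,h^TJ_n(\beta_0)h$; combined with the a priori bound $\|h\|_1 \le \|\hat{\beta}_n\|_1 + \|\beta_0\|_1 \le 2\|\beta_0\|_1 < \infty$ (available on the feasibility event by minimality and Assumption \ref{regularity} $(i)$), this yields $h^T\bar{J}_n h \ge K_2^{-1} h^T J_n(\beta_0) h$ for a global constant $K_2$ depending on $K_1\|\beta_0\|_1$, and only after this reduction does $\epsilon_n$ enter, to replace $J_n(\beta_0)$ by $I_n(\beta_0)$. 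This is exactly the origin of the otherwise unexplained constant $K_2$ in the statement: your argument, were it valid, would deliver the bound with $K_2 = 2$, which is a sign that the step where $K_2$ must arise has been elided.
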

The proof of Theorem \ref{l_q} can be seen in \cite{key Fujimori}.
\subsection{The variable selection consistency of the Dantzig selector}
The aim of this subsection is to show that $\hat{\beta}_n$ selects non-zero components of
$\beta_0$ correctly.
To do this, we define the following estimator for the support index set $T_0$ of the true value 
$\beta_0$:
\[
\hat{T}_n := \{j; |\hat{\beta}_n^j| > \gamma_{n,p_n}\}.
\]
The estimator similar to $\hat{T}_n$ can be seen in \cite{key Fujimori3} which consider a linear model of diffusion processes in a high-dimensional and sparse setting.
The following theorem states that $\hat{\beta}_n$ has a variable selection consistency.
\begin{thm}\label{selection}
Under the Assumption \ref{regularity} and \ref{matrix}, it holds that
\[
\lim_{n \rightarrow \infty} P(\hat{T}_n = T_0) = 1.
\]
\end{thm}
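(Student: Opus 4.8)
The plan is to reduce the event $\{\hat{T}_n = T_0\}$ to a sup-norm control of the estimation error, and then to separate true nonzero coordinates from true zero coordinates by the threshold $\gamma_{n,p_n}$. By Theorem \ref{l_q}, there is an event $\Omega_n$ with $P(\Omega_n) \rightarrow 1$ on which
\[
\|\hat{\beta}_n - \beta_0\|_\infty \le \|\hat{\beta}_n - \beta_0\|_1 \le \delta_n, \qquad \delta_n := \frac{4K_2 S \gamma_{n,p_n}}{\kappa^2(T_0;I_n(\beta_0)) - 4S\epsilon_n}.
\]
Under the rate conditions $\log p_n = O(n^\zeta)$ and $\gamma_{n,p_n} = O(n^{-\alpha}\log p_n)$ with $\zeta < \alpha$, together with $S$ fixed, Assumption \ref{matrix} and $\epsilon_n \rightarrow^p 0$, both $\gamma_{n,p_n} \rightarrow 0$ and $\delta_n \rightarrow^p 0$. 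I would then use the decomposition
\[
\{\hat{T}_n \ne T_0\} \subset \{\exists\, j \in T_0 : |\hat{\beta}_n^j| \le \gamma_{n,p_n}\} \cup \{\exists\, j \in T_0^c : |\hat{\beta}_n^j| > \gamma_{n,p_n}\}
\]
and bound the probability of each piece on $\Omega_n$.

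For the first piece (no false negatives, giving $T_0 \subseteq \hat{T}_n$), fix $j \in T_0$. Assumption \ref{regularity}$(i)$ provides a global constant $C>0$ with $|\beta_0^j| > C$. On $\Omega_n$ the reverse triangle inequality yields $|\hat{\beta}_n^j| \ge |\beta_0^j| - \|\hat{\beta}_n - \beta_0\|_\infty \ge C - \delta_n$. Since $\delta_n \rightarrow 0$ and $\gamma_{n,p_n} \rightarrow 0$ while $C$ is a fixed positive constant, for all large $n$ we have $C - \delta_n > \gamma_{n,p_n}$, hence $|\hat{\beta}_n^j| > \gamma_{n,p_n}$ and $j \in \hat{T}_n$. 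This direction is routine: the beta-min gap $C$ dominates the threshold with a large margin.

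The second piece (no false positives, giving $\hat{T}_n \subseteq T_0$) is where I expect the main obstacle. For $j \in T_0^c$ we have $\beta_0^j = 0$, so on $\Omega_n$, $|\hat{\beta}_n^j| = |\hat{\beta}_n^j - \beta_0^j| \le \delta_n$. To conclude $j \notin \hat{T}_n$ I must show $|\hat{\beta}_n^j| \le \gamma_{n,p_n}$, i.e.\ that the sup-norm error is genuinely dominated by the threshold rather than merely of the same order $\gamma_{n,p_n}$. The $l_1$ bound alone only gives $\delta_n = O(\gamma_{n,p_n})$ with a multiplicative constant $4K_2 S /(\kappa^2(T_0;I_n(\beta_0)) - 4S\epsilon_n)$ that need not be below $1$, so unlike the false-negative direction there is no automatic margin. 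I would close this step either by verifying, under Assumption \ref{matrix}, that this constant stays below $1$ asymptotically, or more robustly by deriving a sharper direct sup-norm estimate for the Dantzig selector: on $\Omega_n$ both feasibility inequalities $\|U_n(\hat{\beta}_n)\|_\infty \le \gamma_{n,p_n}$ and $\|U_n(\beta_0)\|_\infty \le \gamma_{n,p_n}$ hold, and a mean-value expansion of $U_n$ about $\beta_0$ expresses $U_n(\hat{\beta}_n) - U_n(\beta_0)$ through the Hessian, from which $\max_{j \in T_0^c}|\hat{\beta}_n^j| \le \gamma_{n,p_n}$ should follow with probability tending to one.

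Combining the two estimates, on $\Omega_n$ and for $n$ large neither piece of the decomposition occurs, so $\hat{T}_n = T_0$ there. Since $P(\Omega_n) \rightarrow 1$, letting $n \rightarrow \infty$ gives $P(\hat{T}_n = T_0) \rightarrow 1$, as claimed. The sup-norm reduction and the false-negative inclusion are immediate consequences of Theorem \ref{l_q} and Assumption \ref{regularity}$(i)$; the technical heart of the argument is controlling the off-support estimates below the threshold in the false-positive inclusion.
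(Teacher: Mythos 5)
Your overall route is the same as the paper's: reduce the event $\{\hat{T}_n = T_0\}$ to a sup-norm bound on $\hat{\beta}_n - \beta_0$ via $\|\cdot\|_\infty \le \|\cdot\|_1$, then get $T_0 \subset \hat{T}_n$ from the beta-min condition of Assumption \ref{regularity}$(i)$ and $\hat{T}_n \subset T_0$ from the fact that off-support errors must fall below the threshold. Your false-negative step matches the paper's almost verbatim. The obstacle you single out in the false-positive step is genuine, and it is exactly the point the paper glosses over: the paper asserts $\lim_n P(\|\hat{\beta}_n - \beta_0\|_\infty > \gamma_{n,p_n}) = 0$ ``by the $l_1$ bound from Theorem \ref{l_q}'', but that bound only gives $\|\hat{\beta}_n - \beta_0\|_1 \le 4K_2 S \gamma_{n,p_n}/(\kappa^2(T_0;I_n(\beta_0)) - 4S\epsilon_n)$ with probability tending to one, and nothing in Assumptions \ref{regularity} or \ref{matrix} forces the multiplicative constant $4K_2 S/(\kappa^2(T_0;I_n(\beta_0)) - 4S\epsilon_n)$ to be at most $1$. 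The false-negative inclusion survives this (any $O_p(\gamma_{n,p_n})$ error is eventually dominated by the fixed constant $C$), but the false-positive inclusion does not.

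That said, your proposal does not actually close the gap either; it only names it. Of your two suggested repairs, the first (verifying the constant is asymptotically below $1$) cannot be carried out from the stated assumptions, since $K_2$, $S$ and the compatibility factor are not related in that way; the second (a sharper direct $l_\infty$ estimate from the two feasibility inequalities and a mean-value expansion of $U_n$) is not routine: extracting a coordinate-wise bound on $T_0^c$ from $\|J_n(\beta_n^*)(\hat{\beta}_n - \beta_0)\|_\infty \le 2\gamma_{n,p_n}$ requires an irrepresentability or incoherence-type condition on the Hessian (or control of a suitable inverse), which the paper never assumes, so ``should follow'' is unearned. In short, your write-up is incomplete at precisely the point where the paper's own proof is unrigorous. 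A clean fix available to you (but departing from the paper's statement) is to redefine $\hat{T}_n$ with a threshold $\tau_n$ satisfying $\gamma_{n,p_n} = o(\tau_n)$ and $\tau_n \rightarrow 0$ (for instance $\tau_n = \gamma_{n,p_n}\log n$); then $\delta_n = O_p(\gamma_{n,p_n}) = o_p(\tau_n)$ gives the false-positive inclusion, and $\tau_n \rightarrow 0 < C$ gives the false-negative one, with no condition on the constant.
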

\begin{proof}
Note that $\|\hat{\beta}_n - \beta_0\|_\infty \leq \|\hat{\beta}_n - \beta_0\|_1$ 
and that the sparsity $S$ is assumed to be fixed.
We have that 
\[
\lim_{n \rightarrow \infty} 
P\left(\|\hat{\beta}_n - \beta_0\|_\infty > \gamma_{n,p_n}\right) = 0
\]
by the $l_1$ bound from Theorem \ref{l_q} $(i)$.
Therefore, it is sufficient to show that the next inequality
\[
\|\hat{\beta}_n - \beta_0 \|_\infty \leq \gamma_{n,p_n}
\]
implies that 
\[\hat{T}_n = T_0.\]
For every $j \in T_0$, it follows from the triangle inequality that 
\[|\beta_0^j| - |\hat{\beta}_n^j| \leq |\hat{\beta}_n^j - \beta_0^j| \leq \gamma_{n,p_n}.\]
Then, we have that
\[
|\hat{\beta}_n^j| \geq |\beta_0^j|-\gamma_{n,p_n} > \gamma_{n,p_n}
\]
for sufficiently large $n$, which implies that $T_0 \subset \hat{T}_n$. 
On the other hand, for every $j \in T_0^{c}$, we have that 
\[
|\hat{\beta}_n^j - \beta_0^j| = |\hat{\beta}_n^j| \leq \gamma_{n,p_n}
\]
since it holds that $\beta_0^j = 0$.
Then, we can see that $j \in \hat{T}_n^{c}$ which implies that 
$\hat{T}_n \subset T_0$.
We thus obtain the conclusion.
\qed
\end{proof}
\subsection{The maximum partial likelihood estimator for the regression parameter after dimension reduction}
Using the set $\hat{T}_n$, we construct a new estimator $\hat{\beta}_n^{(2)}$ by the solution to the next equation:
\begin{equation}\label{2nd estimator}
U_n(\beta_{\hat{T}_n}) = 0, \quad \beta_{\hat{T}_n^c} = 0.
\end{equation}
We prove the asymptotic normality of $\hat{\beta}_n^{(2)}$.
In this subsection, we assume that the following $S \times S$ matrix $\mathcal{I}$ is positive definite:
\[
\mathcal{I} := \int_0^1 \left[
\frac{s^{(2)}}{s^{(0)}}(\beta_{0T_0},t) - \left(\frac{s^{(1)}}{s^{(0)}}\right)^{\otimes 2}(\beta_{0T_0},t)
\right]  \lambda_0(s) ds,
\]
where
\[
s^{(0)}(\beta_{0 T_0},t) := s_n^{(0)}(\beta_{0T_0},t),
\]
\[
s^{(1)}(\beta_{0 T_0},t) := s_{n T_0}^{(1)}(\beta_{0 T_0},t)
\]
and 
\[
s^{(2)}(\beta_{0 T_0},t) := s_{n T_0,T_0}^{(2)} (\beta_{0 T_0},t).
\]
The following theorem states that this estimator $\hat{\beta}_n^{(2)}$ satisfies $l_1$ consistency.
\begin{thm}\label{l_1}
Under Assumption \ref{regularity} and \ref{matrix}, it holds that
\[
\|\hat{\beta}_n^{(2)} - \beta_0\|_1 \rightarrow^p 0
\]
as $n \rightarrow \infty$.
\end{thm}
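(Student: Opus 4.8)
The plan is to use the variable selection consistency of Theorem \ref{selection} to reduce the problem to a fixed-dimensional one, and then to establish consistency of the refitted maximum partial likelihood estimator on the true support by the classical concavity argument for Cox's partial likelihood. Let $\tilde{\beta}_n$ be the maximizer of the restricted log partial likelihood $\beta_{T_0} \mapsto l_n((\beta_{T_0},0))$, which exists and is unique with probability tending to one (as shown below) and hence solves the restricted score equation $U_{n,T_0}((\beta_{T_0},0)) = 0$, $\beta_{T_0^c} = 0$, where $U_{n,T_0}(\beta) := (U_n(\beta))_{T_0}$. On the event $\{\hat{T}_n = T_0\}$ the estimator $\hat{\beta}_n^{(2)}$ solves the same relation \eqref{2nd estimator}, so $\hat{\beta}_n^{(2)} = \tilde{\beta}_n$ there. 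Since $P(\hat{T}_n = T_0) \rightarrow 1$ by Theorem \ref{selection}, for every $\delta > 0$ we have
\[
P\left(\|\hat{\beta}_n^{(2)} - \beta_0\|_1 > \delta\right) \leq P\left(\hat{T}_n \neq T_0\right) + P\left(\|\tilde{\beta}_n - \beta_0\|_1 > \delta\right),
\]
so it suffices to prove $\|\tilde{\beta}_n - \beta_0\|_1 \rightarrow^p 0$.

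Because $S = |T_0|$ is a fixed constant, $\tilde{\beta}_n$ is a maximum partial likelihood estimator in the fixed dimension $S$, and I would derive its consistency from two facts. First, the restricted score at the truth vanishes asymptotically: as $\beta_0$ has zero components off $T_0$, we have $U_{n,T_0}(\beta_{0T_0}) = (U_n(\beta_0))_{T_0}$, and since $\|U_n(\beta_0)\|_\infty \leq \gamma_{n,p_n} \rightarrow 0$ holds with probability tending to one (feasibility of the true value for the Dantzig selector, using the boundedness in Assumption \ref{regularity}), it follows that $\|U_{n,T_0}(\beta_{0T_0})\|_\infty \rightarrow^p 0$. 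Second, the restricted observed information has a positive definite limit: from $\epsilon_n \rightarrow^p 0$ and $|T_0| = S$ fixed we obtain $J_{n,T_0,T_0}(\beta_0) - I_{n,T_0,T_0}(\beta_0) \rightarrow^p 0$, while $I_{n,T_0,T_0}(\beta_0)$ approaches the positive definite matrix $\mathcal{I}$; together with the uniform convergence of $S_n^{(l)}/n$ from Assumption \ref{regularity} this yields a lower bound $J_{n,T_0,T_0}(\beta) \succeq c\,\mathrm{Id}$, for some $c > 0$, uniformly over a shrinking neighborhood of $\beta_{0T_0}$.

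I would then conclude by the standard concavity argument. Setting $g_n(u) := l_n((\beta_{0T_0}+u,0)) - l_n((\beta_{0T_0},0))$ for $u \in \mathbb{R}^S$, a second-order Taylor expansion gives
\[
g_n(u) = U_{n,T_0}(\beta_{0T_0})^T u - \frac{1}{2} u^T J_{n,T_0,T_0}(\beta_n^*) u
\]
for some $\beta_n^*$ on the segment joining $\beta_{0T_0}$ and $\beta_{0T_0}+u$. On a sphere $\|u\|_2 = \rho$ the linear term is $o_p(\rho)$ by the first fact, while the quadratic term is at least $c\rho^2/2$ with probability tending to one by the second; hence $\sup_{\|u\|_2 = \rho} g_n(u) < 0$ with high probability. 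Since $g_n$ is concave (as $J_{n,T_0,T_0} \succeq 0$) and $g_n(0) = 0$, its maximizer must then lie in $\{\|u\|_2 < \rho\}$, which simultaneously guarantees the existence of $\tilde{\beta}_n$ and gives $\|\tilde{\beta}_n - \beta_{0T_0}\|_2 < \rho$ with probability tending to one for every $\rho > 0$. By the equivalence of norms on the fixed-dimensional space $\mathbb{R}^S$, this is exactly $\|\tilde{\beta}_n - \beta_0\|_1 \rightarrow^p 0$.

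The step I expect to be the main obstacle is the uniform lower bound on the observed information $J_{n,T_0,T_0}(\beta)$ over a neighborhood of $\beta_{0T_0}$, which also underlies the existence of a solution of the restricted score equation. Establishing it requires passing from the assumed uniform convergence of $S_n^{(l)}/n$ in Assumption \ref{regularity} to the convergence of the ratios defining $J_n$ evaluated at the intermediate point $\beta_n^*$, and decomposing the random measure $dN_i$ into its martingale part and its compensator $Y_i(t)\exp(\beta_0^T Z_i(t))\lambda_0(t)\,dt$ in order to identify the limit with the positive definite information matrix; the coercivity needed for existence is then a consequence of the same lower bound.
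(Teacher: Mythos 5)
Your proposal is correct and follows essentially the same route as the paper: reduce to the event $\{\hat{T}_n = T_0\}$, whose probability tends to one by Theorem \ref{selection}, and on that event identify $\hat{\beta}_n^{(2)}$ with the fixed-dimensional restricted maximum partial likelihood estimator on $T_0$, whose consistency gives the result. The only difference is that the paper simply cites Lemma 3.1 of \cite{key gill} for the fixed-dimensional consistency step (after splitting $\|\hat{\beta}_n^{(2)} - \beta_0\|_1$ into its $T_0$ and $T_0^c$ parts), whereas you prove that step from scratch via the classical concavity argument---which is exactly what that citation encapsulates---and you are in fact slightly more careful than the paper in making the identification with the restricted estimator only on the selection event.
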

\begin{proof}
We have that 
\[
\|\hat{\beta}_n^{(2)} - \beta_0\|_1 = \|\hat{\beta}_{n T_0}^{(2)} - \beta_{0 T_0}\|_1 + 
\|\hat{\beta}_{n T_0^c}^{(2)}\|_1.
\]
It follows from Lemma 3.1 of \cite{key gill} that the first term of right-hand side is $o_p(1)$ since the sparsity $S$ is assume to be fixed. 
Moreover, we have that
\[
\|\hat{\beta}_{n T_0^c}^{(2)}\|_1 1_{\{\hat{T}_n = T_0\}} = 0
\]
by the definition of $\hat{\beta}_n^{(2)}$.
Noting that $1_{\{\hat{T}_n = T_0\}} \rightarrow^p 1$, we obtain the conclusion by using 
Slutsky's theorem.
\qed
\end{proof}
To show the asymptotic normality of $\hat{\beta}_n^{(2)}$, we need to prove the next lemma.
\begin{lem}\label{fischer}
For every random sequence $\{\beta_n^*\}_{n \in \mathbb{N}}$ which satisfies that
\[
\|\beta_n^* - \beta_0\|_1 \rightarrow^p 0
\]
as $n \rightarrow \infty$, it holds that
\[
\|J_n(\beta_n^*) - I_n(\beta_0)\|_\infty = o_p(1).
\]
\end{lem}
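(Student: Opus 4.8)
The plan is to first peel off the part of the difference that is already understood. By the triangle inequality,
\[
\|J_n(\beta_n^*) - I_n(\beta_0)\|_\infty \le \|J_n(\beta_n^*) - J_n(\beta_0)\|_\infty + \|J_n(\beta_0) - I_n(\beta_0)\|_\infty ,
\]
and the last term equals $\epsilon_n$, which is already known to satisfy $\epsilon_n \rightarrow^p 0$. Thus the whole problem reduces to the equicontinuity-type statement $\|J_n(\beta_n^*) - J_n(\beta_0)\|_\infty = o_p(1)$, that is, controlling the random Hessian as its argument moves from the random point $\beta_n^*$ to $\beta_0$.

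To do this I would write the integrand of $J_n$ as the matrix-valued function
\[
G_n(\beta,t) := \frac{S_n^{(2)}}{S_n^{(0)}}(\beta,t) - \left(\frac{S_n^{(1)}}{S_n^{(0)}}\right)^{\otimes 2}(\beta,t),
\]
so that $J_n(\beta) = \frac1n\sum_{i=1}^n\int_0^1 G_n(\beta,t)\,dN_i(t)$. Since each $N_i$ jumps at most once on $[0,1]$, the random measure $\frac1n\sum_i dN_i$ has total mass at most one, whence
\[
\|J_n(\beta_n^*) - J_n(\beta_0)\|_\infty \le \sup_{t\in[0,1]}\|G_n(\beta_n^*,t) - G_n(\beta_0,t)\|_\infty ,
\]
and it remains to bound the integrand uniformly in $t$.

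The key step is the uniform estimate $\sup_{t}\|\frac1n S_n^{(l)}(\beta_n^*,t) - \frac1n S_n^{(l)}(\beta_0,t)\|_\infty = o_p(1)$ for $l = 0,1,2$. First I would observe that $\|\beta_n^*\|_1 \le \|\beta_0\|_1 + 1$ on an event whose probability tends to one, using $\|\beta_0\|_1 < \infty$ from Assumption \ref{regularity} $(i)$. On that event the elementary bound $|e^a - e^b| \le e^{\max(a,b)}|a-b|$, applied with $a = \beta_n^{*T}Z_i(t)$ and $b = \beta_0^T Z_i(t)$, together with $|(\beta_n^* - \beta_0)^T Z_i(t)| \le K_1\|\beta_n^* - \beta_0\|_1$ from the uniform boundedness of the covariates (Assumption \ref{regularity} $(ii)$), furnishes a Lipschitz bound for the exponential weights that is uniform in $i$ and $t$. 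Since the extra factors appearing in $S_n^{(1)}$ and $S_n^{(2)}$ are bounded by $K_1$ and $K_1^2$ and $\frac1n\sum_i Y_i(t) \le 1$, this yields a bound of the form $C\,\|\beta_n^* - \beta_0\|_1$, which is $o_p(1)$ by hypothesis.

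Finally I would pass from the $S_n^{(l)}$ to the ratio $G_n$. Because the ratios are invariant under the $1/n$ normalization, I may work with $\frac1n S_n^{(l)}$; by Assumptions \ref{regularity} $(iv)$ and $(v)$ these are, on an event of probability tending to one, uniformly (in $\beta$ and $t$) bounded above, while $\frac1n S_n^{(0)}$ is uniformly bounded below by some $\delta > 0$. The standard algebra for differences of ratios whose denominators stay away from zero then converts the uniform closeness of numerators and denominators into $\sup_t\|G_n(\beta_n^*,t) - G_n(\beta_0,t)\|_\infty = o_p(1)$, which completes the argument. The main obstacle is precisely that $\beta_n^*$ is random, so no pointwise-in-$\beta$ limit can be invoked directly; the uniformity over $\beta$ supplied by Assumption \ref{regularity} $(iv)$, and in particular the lower bound in $(v)$ keeping the denominator $\frac1n S_n^{(0)}$ away from zero, are what make the ratio estimate go through, while the Lipschitz estimate for the exponential weights is the main hands-on computation.
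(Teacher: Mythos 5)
Your proposal is correct and follows essentially the same route as the paper: the heart of both arguments is the identical Lipschitz-type estimate, uniform in $t$, namely $\frac1n\|S_n^{(l)}(\beta_n^*,t)-S_n^{(l)}(\beta_0,t)\|_\infty \leq C\exp(K_1\|\beta_0\|_1)\left|\exp(K_1\|\beta_n^*-\beta_0\|_1)-1\right|$, obtained from the uniform boundedness of the covariates. The steps you make explicit --- the triangle-inequality split using $\epsilon_n \rightarrow^p 0$, the total-mass bound on $\frac1n\sum_i dN_i$, and the ratio algebra using the denominator lower bound from Assumption \ref{regularity} $(iv)$--$(v)$ --- are precisely what the paper compresses into ``we obtain the conclusion by a similar way to the proof in Andersen and Gill,'' so your write-up is a legitimate (and more self-contained) rendering of the same argument.
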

\begin{proof}
We have 
for every $l = 0,1,2$ and $t \in [0,1]$ that 
\begin{eqnarray*}
\lefteqn{
\frac{1}{n}\|S_n^{(l)} (\beta_n^*,t) - S_n^{(l)}(\beta_0,t)\|_\infty} \\
&\leq&
\frac{1}{n}\left\|\sum_{i=1}^n Y_i(t) Z_i(t)^{\otimes l}(t) \exp(\beta_0^T Z_i(t))
\left\{\exp[\|Z_i(t)\|_\infty \|\beta_n^* - \beta_0\|_1] -1\right\}\right\|_\infty \\
&\leq&
K_1 \exp(K_1 \|\beta_0\|_1) |\exp(K_1 \|\beta_n^* - \beta_0\|_1) -1|.
\end{eqnarray*}
The right-hand side of this inequality converges to $0$ in probability when $\|\beta_n^* - \beta_0\|_1 \rightarrow^p 0$ as $n \rightarrow \infty$.
Then, we obtain the conclusion by a similar way to the proof in \cite{key gill}.
\qed
\end{proof}
Then, we can prove the asymptotic normality in the following sense by a similar way to that in \cite{key gill}.
\begin{thm}
Under Assumption \ref{regularity} and \ref{matrix}, it holds that 
\[
\sqrt{n}(\hat{\beta}_{n \hat{T}_n}^{(2)} - \beta_{0 T_0}) 1_{\{\hat{T}_n = T_0\}}
\rightarrow^d N(0,\mathcal{I}^{-1}).
\]
\end{thm}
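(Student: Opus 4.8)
The plan is to reduce the statement to the classical finite-dimensional $Z$-estimation argument of \cite{key gill}, carried out on the event $A_n := \{\hat{T}_n = T_0\}$. By Theorem \ref{selection} we have $P(A_n) \rightarrow 1$, hence $1_{A_n} \rightarrow^p 1$, so by Slutsky's theorem it suffices to prove the stated convergence for $\sqrt{n}(\hat{\beta}_{n T_0}^{(2)} - \beta_{0 T_0})$ on $A_n$. On $A_n$ the index sets coincide, the left-hand side is a genuine $S$-dimensional vector (which is why the indicator is needed to make the difference well defined), and the defining relation (\ref{2nd estimator}) reads $U_{n,T_0}(\hat{\beta}_{n T_0}^{(2)}) = 0$ with $\hat{\beta}_{n T_0^c}^{(2)} = 0$, where $U_{n,T_0}$ is the $T_0$-subvector of $U_n$ with the off-support coordinates set to zero. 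The existence of such a root together with its $l_1$-consistency is already provided by Theorem \ref{l_1}.

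First I would expand the restricted score around $\beta_{0 T_0}$. Since $U_n$ is the gradient of $l_n$ and $J_n$ the Hessian of $-l_n$, we have $\nabla U_n = -J_n$, so the fundamental theorem of calculus gives, on $A_n$,
\[
0 = U_{n,T_0}(\hat{\beta}_{n T_0}^{(2)}) = U_{n,T_0}(\beta_{0 T_0}) - \bar{J}_n(\hat{\beta}_{n T_0}^{(2)} - \beta_{0 T_0}),
\]
where $\bar{J}_n := \int_0^1 J_{n,T_0,T_0}(\beta_{0 T_0} + u(\hat{\beta}_{n T_0}^{(2)} - \beta_{0 T_0}))\,du$, the intermediate arguments being embedded in $\mathbb{R}^{p_n}$ with zero off-support coordinates. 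Once $\bar{J}_n$ is invertible this rearranges to
\[
\sqrt{n}(\hat{\beta}_{n T_0}^{(2)} - \beta_{0 T_0}) = \bar{J}_n^{-1}\,\sqrt{n}\,U_{n,T_0}(\beta_{0 T_0}).
\]

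Next I would treat the two factors separately. For the score, note that $\sqrt{n}\,U_{n,T_0}(\beta_{0 T_0})$ is the value at $t=1$ of the $T_0$-restriction of the square-integrable martingale $\sqrt{n}\,U_n(\beta_0,t) = \sum_{i=1}^n \int_0^t \xi_{n T_0,i}\,dM_i(s)$, whose predictable variation process is $\sum_{i=1}^n \int_0^t \xi_{n T_0,i}^{\otimes 2}\,Y_i(s)\exp(\beta_0^T Z_i(s))\lambda_0(s)\,ds$; by conditions $(iv)$ and $(v)$ of Assumption \ref{regularity} this converges in probability (at $t=1$) to $\mathcal{I}$, while Assumption \ref{regularity} $(vii)$ is precisely Lindeberg's condition. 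Rebolledo's martingale central limit theorem then yields $\sqrt{n}\,U_{n,T_0}(\beta_{0 T_0}) \rightarrow^d N(0,\mathcal{I})$. For the Hessian, the $l_1$-consistency of Theorem \ref{l_1} forces every intermediate argument $\beta_{0 T_0} + u(\hat{\beta}_{n T_0}^{(2)} - \beta_{0 T_0})$ (embedded with zero off-support, and using $\beta_{0 T_0^c} = 0$) to satisfy $\|\cdot - \beta_0\|_1 \leq \|\hat{\beta}_n^{(2)} - \beta_0\|_1 \rightarrow^p 0$ uniformly in $u \in [0,1]$; Lemma \ref{fischer} therefore applies to the integrand and gives $\bar{J}_n \rightarrow^p \mathcal{I}$. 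Because $\mathcal{I}$ is positive definite and $S$ is fixed, the continuous mapping theorem yields $\bar{J}_n^{-1} \rightarrow^p \mathcal{I}^{-1}$, so $\bar{J}_n$ is invertible with probability tending to one.

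Finally, Slutsky's theorem combines the two limits:
\[
\sqrt{n}(\hat{\beta}_{n T_0}^{(2)} - \beta_{0 T_0}) = \bar{J}_n^{-1}\,\sqrt{n}\,U_{n,T_0}(\beta_{0 T_0}) \rightarrow^d \mathcal{I}^{-1} N(0,\mathcal{I}) = N(0,\mathcal{I}^{-1}),
\]
and multiplying through by $1_{A_n} \rightarrow^p 1$ gives the assertion. The structural feature that makes the sandwich collapse to $\mathcal{I}^{-1}$ is that the limiting restricted observed information (the limit of $\bar{J}_n$) and the limiting covariance of the restricted score coincide, which is the usual identity of observed and expected information for Cox's model. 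The main obstacle will be the Hessian step: one must justify applying Lemma \ref{fischer}, stated for a sequence $\beta_n^*$ with $\|\beta_n^* - \beta_0\|_1 \rightarrow^p 0$, to the random intermediate argument inside $\bar{J}_n$ uniformly over $u \in [0,1]$, and then transfer the $\|\cdot\|_\infty$-convergence of the full $p_n \times p_n$ information matrix to convergence of the fixed $S \times S$ block and of its inverse --- this is exactly where fixedness of $S$ and nonsingularity of $\mathcal{I}$ are indispensable.
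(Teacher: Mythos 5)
Your proposal is correct and follows essentially the same route as the paper's own proof: a Taylor expansion of the restricted score on the event $\{\hat{T}_n = T_0\}$, control of the Hessian via Lemma \ref{fischer} together with the consistency from Theorem \ref{l_1}, the martingale central limit theorem (with Assumption \ref{regularity} $(vii)$ serving as Lindeberg's condition) for $\sqrt{n}\,U_{n T_0}(\beta_{0 T_0})$, and finally Slutsky's theorem combined with Theorem \ref{selection}. The only difference is expository: you spell out the integrated-Hessian form of the expansion and the inversion step, which the paper compresses into a citation of the argument of Andersen and Gill.
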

\begin{proof}
It follows from the Taylor expansion and Lemma \ref{fischer} that
\[
\sqrt{n} (\hat{\beta}_{n \hat{T}_n}^{(2)} - \beta_{0 T_0}) 1_{\{\hat{T}_n = T_0\}}
= \mathcal{I}^{-1} \sqrt{n} U_{n T_0}(\beta_{0 T_0}) 1_{\{\hat{T}_n = T_0\}} + o_p(1).
\]
Using the martingale central limit theorem, we can see that 
\[
\sqrt{n} U_{n T_0}(\beta_{0 T_0}) \rightarrow^d N(0, \mathcal{I}).
\]
Since Theorem \ref{selection} implies that $1_{\{\hat{T}_n = T_0\}} \rightarrow^p 1$ as $n \rightarrow \infty$, we obtain the conclusion by using Slutsky's theorem.
\qed
\end{proof}
\section{The estimator for the cumulative baseline hazard function}
We define the estimator for $\Lambda_0(t)$ by the following Breslow type estimator:
\[
\hat{\Lambda}(t) := \int_0^t \frac{d\bar{N}(s)}{\sum_{i = 1}^n Y_i(s) \exp(\hat{\beta}_n^{(2) T} Z_i(s))},\quad t \in [0,1],
\]
where $\hat{\beta}_n^{(2)}$ is defined by the equation (\ref{2nd estimator}).
We discuss the asymptotic property of $\hat{\Lambda}$ in this section.
For every $t \in [0,1]$, we have that 
\[
\sqrt{n}\{\hat{\Lambda}(t) - \Lambda_0(t)\}
= (I) + (II) + (III),
\]
where
\[
(I) = \sqrt{n} \int_0^t \left\{\frac{1}{S_n^{(0)}(\hat{\beta}_n^{(2)},s)} - \frac{1}{S_n^{(0)}(\beta_0,s)}\right\} d\bar{N}(s),
\]
\[
(II) = \sqrt{n} \left\{\int_0^t \frac{d\bar{N}(s)}{S_n^{(0)}(\beta_0,s)} - 
\int_0^t \lambda_0(s) 1_{\{\sum_{i=1}^n Y_i(s) >0\}}
\right\}
\]
and
\[
(III) = \sqrt{n}\left\{\int_0^t \lambda_0(s) 1_{\{\sum_{i=1}^n Y_i(s) >0\}}-\Lambda_0(t)
\right\}.
\]
The third term $(III)$ is asymptotically negligible because it follows from Assumption \ref{regularity} that 
\[
\lim_{n \rightarrow \infty}P\left(\left\{\int_0^t \lambda_0(s) 1_{\{\sum_{i=1}^n Y_i(s) >0\}}-\Lambda_0(t)
\right\} = 0\right) = 1.
\]
Moreover, we have that $(II)$ equals to the following process $\{W_n(t)\}_{t \in [0,1]}$:
\[
W_n(t) = \sqrt{n} \int_0^t \frac{d\bar{M}(s)}{S_n^{(0)}(\beta_0,s)},
\]
which is a square integrable martingale.
Using the Taylor expansion, we have that
\[
(I) = H_n(\beta_n^*,t)^T (\hat{\beta}_n^{(2)}-\beta_0),
\]
where 
\[
H_n(\beta_n^*,t) := -\int_0^t \frac{S_n^{(1)}}{\{S_n^{(0)}\}^2}(\beta_n^*,s) d\bar{N}(s)
\]
and $\beta_n^*$ lies between $\hat{\beta}_n^{(2)}$ and $\beta_0$.
Since it holds that $\|\beta_n^* - \beta_0\|_1 = o_p(1)$ by Theorem \ref{l_1}, 
we can see that
\[
\sup_{t \in [0,1]} \left\|H_n(\beta_n^*,t) + \int_0^t \frac{s_n^{(1)}}{s_n^{(0)}}(\beta_0,s) \lambda_0(s) ds\right\|_\infty
= o_p(1)
\]
by a similar way to the proof of Lemma \ref{fischer}.
%\begin{lem}\label{martingale}
%Under Assumption \ref{regularity}, it holds that
%\[
%\langle
%U_{n T_0} (\beta_{0 T_0},\cdot), W_n(\cdot)
%\rangle (t) = 0, \quad t \in [0,1],
%\]
%where 
%\[
%U_{n T_0}(\beta_{0 T_0},t) := 
%\frac{1}{n} \sum_{i = 1}^n \int_0^t 
%\left\{
%Z_{i T_0}(s) - \frac{S_{n T_0}^{(1)}}{S_{n T_0}} (\beta_{0 T_0},s) dM_i(s)
%\right\}.
%\]
%\end{lem}
%\begin{proof}
%It holds that for every $t \in [0,1]$ that 
%\begin{eqnarray*}
%\lefteqn{
%\langle
%U_{n T_0} (\beta_{0 T_0},\cdot), W_n(\cdot)
%\rangle (t)} \\
%&=&
%\frac{1}{\sqrt{n}} \sum_{i = 1}^n \int_0^t 
%\frac{1}{S_n^{(0)}(\beta_{0 T_0},s)} \left\{
%Z_{i T_0}(s) - \frac{S_{n T_0}^{(1)}}{S_n^{(0)}} (\beta_{0 T_0},s) 
%\right\}d \langle M_i(\cdot), M_i(\cdot) \rangle(s) \\
%&=& 
%\frac{1}{\sqrt{n}} \int_0^t 
%\end{eqnarray*}
%\end{proof}
Therefore, we obtain the following theorem, which is proved by using Slutsky's theorem and a similar way to that in \cite{key gill}.
\begin{thm}\label{Breslow}
Under Assumption \ref{regularity} and \ref{matrix}, it holds that
$\sqrt{n}(\hat{\beta}_{n\hat{T}_n}^{(2)}-\beta_{0T_0})1_{\{\hat{T}_n = T_0\}}$ and the process equal in the point $t$ to 
\[
\left[\sqrt{n}\{\hat{\Lambda}(t)-\Lambda_0(t)\} + \sqrt{n} 
\int_0^t (\hat{\beta}_{n\hat{T}_n}^{(2)}-\beta_{0T_0})^T \frac{s^{(1)}}{s^{(0)}}(\beta_{0T_0},s) \lambda_0(s) ds\right] 1_{\{\hat{T}_n = T_0\}} 
\]
are asymptotically independent. The latter process is asymptotically distributed as 
a Gaussian martingale with the variance function
\[
\int_0^t  \frac{\lambda_0(s)}{s^{(0)}(\beta_{0T_0},s)} ds.
\]
\end{thm}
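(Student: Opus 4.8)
The plan is to reduce, on the event $\{\hat{T}_n = T_0\}$, the corrected Breslow process to the martingale $W_n$, to identify the limit law of $W_n$, and then to exploit an exact orthogonality between $W_n$ and the score martingale driving $\hat{\beta}_n^{(2)}$ so that a joint martingale central limit theorem delivers both the asymptotic independence and the stated variance. First I would assemble the decomposition already derived above: combining $\sqrt{n}\{\hat{\Lambda}(t) - \Lambda_0(t)\} = (I)+(II)+(III)$ with the Taylor expansion of $(I)$, the uniform approximation $\sup_{t}\|H_n(\beta_n^*,t) + \int_0^t \frac{s_n^{(1)}}{s_n^{(0)}}(\beta_0,s)\lambda_0(s)\,ds\|_\infty = o_p(1)$, the identity $(II)=W_n(t)$, and the negligibility of $(III)$, I would show that on $\{\hat{T}_n=T_0\}$ the bracketed process in the statement equals $W_n(t)+o_p(1)$ uniformly in $t$. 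The correction term $\sqrt{n}\int_0^t (\hat{\beta}_{n\hat{T}_n}^{(2)}-\beta_{0T_0})^T \frac{s^{(1)}}{s^{(0)}}(\beta_{0T_0},s)\lambda_0(s)\,ds$ is precisely what cancels the limit of $(I)$, using that on $\{\hat{T}_n=T_0\}$ the estimator $\hat{\beta}_n^{(2)}$ is supported on $T_0$, so that $s^{(1)}/s^{(0)}(\beta_{0T_0},\cdot)$ is the restriction of $s_n^{(1)}/s_n^{(0)}(\beta_0,\cdot)$ to $T_0$.

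Next I would pin down the limit of $W_n$. Since $d\langle \bar{M}\rangle(s) = S_n^{(0)}(\beta_0,s)\lambda_0(s)\,ds$ by the intensity formula, the predictable variation of the square integrable martingale $W_n$ is $\langle W_n\rangle(t) = \int_0^t \lambda_0(s)/(\tfrac{1}{n}S_n^{(0)}(\beta_0,s))\,ds$, which by Assumption \ref{regularity}$(iv)$ converges in probability to $\int_0^t \lambda_0(s)/s^{(0)}(\beta_{0T_0},s)\,ds$. Because the $T_i$ never occur simultaneously, the jumps of $\bar{N}$ are of unit size and those of $W_n$ are of order $n^{-1/2}$, so the Lindeberg condition holds and Rebolledo's martingale central limit theorem gives that $W_n$ converges weakly to a Gaussian martingale with the stated variance function.

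The crux is the asymptotic independence, for which I would work with the stacked martingale $(\sqrt{n}U_{nT_0}(\beta_{0T_0},\cdot),\,W_n(\cdot))$, recalling from the preceding asymptotic-normality theorem that $\sqrt{n}(\hat{\beta}_{n\hat{T}_n}^{(2)}-\beta_{0T_0})1_{\{\hat{T}_n=T_0\}} = \mathcal{I}^{-1}\sqrt{n}U_{nT_0}(\beta_{0T_0}) + o_p(1)$. The key computation is the predictable covariation: using $\langle M_i,M_j\rangle = 0$ for $i\neq j$ and $d\langle M_i\rangle(s) = Y_i(s)\exp(\beta_0^T Z_i(s))\lambda_0(s)\,ds$, the integrand of $\langle \sqrt{n}U_{nT_0}^{(k)},W_n\rangle$ at time $s$ is proportional to $\sum_i \{Z_{iT_0}^{(k)}(s) - S_{nT_0}^{(1),(k)}/S_n^{(0)}(\beta_0,s)\}\,Y_i(s)\exp(\beta_0^T Z_i(s))$, which vanishes identically by the very definitions of $S_{nT_0}^{(1)}$ and $S_n^{(0)}$. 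Hence the predictable covariation is exactly zero for every $n$, and the multivariate version of the martingale central limit theorem yields joint convergence of $(\sqrt{n}U_{nT_0}(\beta_{0T_0}),W_n)$ to a pair of independent Gaussian limits. Applying the continuous linear map $\mathcal{I}^{-1}$ to the first coordinate, and invoking Theorem \ref{selection} so that $1_{\{\hat{T}_n=T_0\}}\rightarrow^p 1$, Slutsky's theorem then transfers the independence to the two objects in the statement.

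I expect the main obstacle to be the rigorous joint weak convergence rather than any single estimate: one must verify the hypotheses of Rebolledo's theorem for the combined $(S+1)$-dimensional martingale, in particular a common Lindeberg condition for which Assumption \ref{regularity}$(vii)$ controls the score part and the no-ties assumption controls $W_n$, and one must treat $W_n(\cdot)$ as a process in the Skorokhod space $D[0,1]$ while the score enters only through its terminal value, so that the joint limit lives in $\mathbb{R}^S \times D[0,1]$ and the vanishing covariation genuinely yields independence of the limiting Gaussian objects.
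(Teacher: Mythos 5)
Your proposal is correct and follows essentially the same route as the paper: the paper sets up exactly the decomposition $(I)+(II)+(III)$ with the uniform limit of $H_n$, and then defers the rest to ``Slutsky's theorem and a similar way to that in Andersen and Gill (1982)'' --- which is precisely the argument you spell out, namely the vanishing predictable covariation $\langle \sqrt{n}U_{nT_0}(\beta_{0T_0},\cdot),W_n\rangle \equiv 0$ (by the definitions of $S_{nT_0}^{(1)}$ and $S_n^{(0)}$) combined with Rebolledo's joint martingale central limit theorem and the indicator $1_{\{\hat{T}_n=T_0\}}\rightarrow^p 1$. Your write-up in fact supplies the details (orthogonality computation, Lindeberg conditions, the space $\mathbb{R}^S\times D[0,1]$) that the paper leaves implicit in the citation.
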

\vskip 20pt
{\bf Acknowledgements.}
The author would like to express the appreciation to Prof.\ Y.\ Nishiyama of Waseda University and Dr.\ K.\ Tsukuda of the University of Tokyo for long hours discussion about this work.
%% The Appendices part is started with the command \appendix;
%% appendix sections are then done as normal sections
%% \appendix

%% \section{}
%% \label{}

%% If you have bibdatabase file and want bibtex to generate the
%% bibitems, please use
%%
%%  \bibliographystyle{elsarticle-harv} 
%%  \bibliography{<your bibdatabase>}

%% else use the following coding to input the bibitems directly in the
%% TeX file.
\vskip 20pt
%{\bf References}

\end{document}